\renewcommand{\PackageWarningNoLine}[2]{}
\begin{document}

\title*{Domain decomposition methods \\ with overlapping subdomains \\ for time-dependent problems}
\titlerunning{DDM with overlapping subdomains for time-dependent problems} 
\author{Petr Vabishchevich and Petr Zakharov}

\institute{Petr Vabishchevich \at Nuclear Safety Institute of RAS, Moscow, Russia, \email{vabishchevich@gmail.com}
\and Petr Zakharov \at North-Eastern Federal University, Yakutsk, Russia, \email{zapetch@gmail.com}}
\maketitle

\section{Introduction}
\label{vabishchevich_contrib-sec:1}

Domain decomposition methods are used for the numerical solution of boundary value problems 
for partial differential equations on parallel computers.  
In the theory of domain decomposition methods, modern studies are most fully presented for stationary 
problems \cite{QuarteroniValli1999,ToselliWidlund2005}.
Computational algorithms with and without overlap of subdomains are applied
in synchronous (sequential) and asynchronous (parallel) methods.  

Domain decomposition methods for unsteady problems are based on two basic
approaches \cite{SamarskiiMatusVabischevich2002}. 
\begin{enumerate}
 \item 
For the numerical solution of time-dependent problems, we use 
the standard implicit approximation in time.    
Domain decomposition methods are applied to solve the discrete problem at the new time level.  
The number of iterations in optimal iterative methods 
for domain decomposition does not depend on  steps of discretization in time and space.
 \item 
To solve unsteady problems, iteration-free domain decomposition algorithms are developed.  
We construct a special scheme of splitting into subdomains  (regionally additive schemes).
\end{enumerate} 
A domain decomposition scheme is defined by a decomposition of the computational domain and by specifying a splitting of the problem operator.  
To construct decomposition operators, it is convenient to use the partition of unity for the computational domain.

In DD methods with overlap, we introduce a function associated with each subdomain, and this function takes value between zero and one.  
In the extreme case, the width of overlap of subdomains is equal to the step of discretization in space.  
In this case, regionally additive schemes can be interpreted as non-overlapping domain decomposition schemes, where data exchange is achieved by setting proper boundary conditions 
for each subdomain.  

Domain decomposition methods for unsteady problems include the following steps:
\begin{itemize}
 \item  Decomposition of a domain;
 \item  Constructing operators of decomposition;
 \item  Design of a splitting scheme;
 \item  A study of convergence;
 \item  Computational implementation.
\end{itemize}
These basic questions are discussed
in this paper using a boundary value problem for the second-order parabolic equation as an example.

\section{Standard approximation} 

Assume that in a bounded domain $\Omega$, a unknown function $u({\bm x},t)$
satisfies the following equation:
\begin{equation}\label{vabishchevich_contrib-1}
   \frac{\partial u}{\partial t} 
   - \sum_{\alpha =1}^{m}
   \frac{\partial }{\partial x_\alpha} 
   \left ( k({\bm x})  \frac{\partial u}{\partial x_\alpha} \right ) = f({\bm x},t),
   \quad {\bm x}\in \Omega,
   \quad 0 < t \leq  T, 
\end{equation} 
where $k({\bm x}) \geq \kappa > 0, \  {\bm x}\in \Omega$.
Homogeneous Dirichlet boundary conditions are applied:
\begin{equation}\label{vabishchevich_contrib-2}
   u({\bm x},t) = 0,
   \quad {\bm x}\in \partial \Omega,
   \quad 0 < t \leq T.
\end{equation} 
The initial condition is
\begin{equation}\label{vabishchevich_contrib-3}
   u({\bm x},0) = u^0({\bm x}),
   \quad {\bm x}\in \Omega. 
\end{equation} 

Let $(\cdot,\cdot), \|\cdot\|$ be the scalar product and the norm in $L_2(\Omega)$, respectively:
\[
(u,v) = \int_\Omega u({\bm x})v({\bm x}) d {\bm x},
\quad \|u\| = (u,u)^{1/2}.
\]
A symmetric, positive definite, bilinear form $d(u,v)$ such that
\[
d(u,v) = d(v,u), 
\quad  d(u,u) \geq \delta \|u\|^2,
\quad  \delta > 0,
\]
is associated with a Hilbert space $H_d$  equipped with the following 
scalar product and norm:
\[
(u,v)_d  = d(u,v), \quad  \|u\|_d  = (d(u,u))^{1/2}.
\]

Suppose $t = t^n  = n\tau , \ n = 0,1,...$, where $\tau > 0$ is a constant time step.
A finite-dimensional space of finite elements is denoted by ${\cal V}^h$,
and $u^n = u(\bm x, t^n)\ (u^n \in {\cal V}^h)$ stands for the approximate solution at the time level $t = t^n$.
The boundary value problem (\ref{vabishchevich_contrib-1})--(\ref{vabishchevich_contrib-3}) is treated in the variational form:
\begin{equation}\label{vabishchevich_contrib-4}
 \left (\frac{d u}{d t} , v \right ) + a(u,v) = (f,v),
 \quad \forall v \in H^1_0 (\Omega), 
 \quad 0 < t \leq T, 
\end{equation} 
\begin{equation}\label{vabishchevich_contrib-5}
 (u(0), v) = (u^0, v) ,  
 \quad \forall v \in H^1_0 (\Omega) , 
\end{equation} 
where
\[
 a(u,v) = \int_{\Omega} k({\bm x}) \ {\rm grad} \, u \ {\rm grad} \, v \ d {\bm x} .
\] 

We study the projection-difference scheme (schemes with weights) for (\ref{vabishchevich_contrib-4}), (\ref{vabishchevich_contrib-5}):
\begin{equation}\label{vabishchevich_contrib-6}
 \left (\frac{y^{n+1} - y^{n}}{\tau},v \right ) + 
 a(\sigma y^{n+1} + (1-\sigma) y^n,v)= 
 (f(\sigma t^{n+1} + (1-\sigma) t^n),v), 
\end{equation} 
\begin{equation}\label{vabishchevich_contrib-7}
   (y^0, v) = (u^0, v) , 
  \quad \forall v \in {\cal V}^h, \quad n=1,2,...  , 
\end{equation} 
where $\sigma $ is a number (weight). 
If $\sigma = 0$, then the scheme (\ref{vabishchevich_contrib-6}), (\ref{vabishchevich_contrib-7}) is the explicit (Euler forward-time) scheme; 
for $\sigma = 1$, we obtain the fully implicit (Euler backward-time) scheme; 
and $\sigma = 0.5$ yields the averaged (the so-called Crank--Nicolson) scheme. 
The condition
\[
(v, v)  + \left ( \sigma - \frac{1}{2} \right ) \ \tau \ a(v, v) \geq 0,
\quad \forall v \in {\cal V}^h 
\]
is necessary and sufficient for the stability of the scheme in the space $H_a$
\cite{Samarskii1989,SamarskiiMatusVabischevich2002}.

\section{Decomposition operators} 

To construct a domain decomposition scheme, we introduce the partition of unity for the computational domain  $\Omega$ \cite{Laevsky1987}. 
Assume that the domain $\Omega$  consists of $p$  separate subdomains:  
\[
  \Omega =\Omega _1\cup \Omega _2\cup ...\cup \Omega _p.
\]
Individual subdomains may be overlaped.  
With an individual subdomain $\Omega_{\alpha}, \ \alpha = 1,2,...,p$
we associate the function $\eta_{\alpha}({\bm x}), \ \alpha = 1,2,...,p$ such that  
\[
  \eta_{\alpha}({\bm x}) = \left \{
   \begin{array}{cc}
     > 0, &  {\bm x} \in \Omega_{\alpha},\\
     0, &  {\bm x} \notin  \Omega_{\alpha}, \\
   \end{array}
  \right .
  \quad \alpha = 1,2,...,p ,  
\]
where
\[
  \sum_{\alpha =1}^{p} \eta_{\alpha}({\bm x}) = 1,
  \quad {\bm x} \in \Omega .
\]

For problem (\ref{vabishchevich_contrib-4}), (\ref{vabishchevich_contrib-5}), we have
\[
 a(u,v) = \sum_{\alpha =1}^{p} a_\alpha (u,v) ,
 \quad (f,v) = \sum_{\alpha =1}^{p} (f_\alpha,v) . 
\] 
Here
\[
 (f_\alpha,v) = \int_{\Omega} \eta_{\alpha}({\bm x}) f({\bm x}, t)  v \ d {\bm x} 
\] 
and (the standard decomposition)
\[
 a_\alpha (u,v) = \int_{\Omega} \eta_{\alpha}({\bm x}) k({\bm x}) \ {\rm grad} \, u \ {\rm grad} \, v \ d {\bm x} ,
  \quad \alpha = 1,2,...,p .
\] 

Other variants of domain decompositon operators \cite{Vabischevich1989} are associated with
the following forms:
\[
 a_\alpha (u,v) = \int_{\Omega} k({\bm x}) \ {\rm grad} \, u \ {\rm grad} \,  (\eta_{\alpha}({\bm x}) v) \ d {\bm x} ,
\] 
\[
 a_\alpha (u,v) = \int_{\Omega} k({\bm x}) \ {\rm grad} \, (\eta_{\alpha}({\bm x}) u) \ {\rm grad} \, v \ d {\bm x} ,
  \quad \alpha = 1,2,...,p .
\] 

Let us investigate the corresponding operator-splitting schemes. 
From problem (\ref{vabishchevich_contrib-4}), (\ref{vabishchevich_contrib-5}), we can go to the Cauchy problem
for the evolutionary equation of first order (matrix form \cite{Thomee2006}):
\begin{equation}\label{vabishchevich_contrib-8}
  B \frac{d  y}{d  t} + A y = \varphi (t),    
  \quad 0 < t \leq T , 
\end{equation} 
\begin{equation}\label{vabishchevich_contrib-9}
  y(0) = y^0. 
\end{equation} 
Here the mass matrix $B = B^* > 0$,
and the stiffness matrix $A = A^* > 0$.

For (\ref{vabishchevich_contrib-8}), (\ref{vabishchevich_contrib-9}), we have the following operator splitting:
\[
 A = \sum_{\alpha =1}^{p} A_\alpha,
 \quad \varphi =   \sum_{\alpha =1}^{p} \varphi_\alpha
\] 
with  (the standard decomposition)
\[
 A_\alpha = A_\alpha^* \geq 0,
 \quad \alpha = 1,2,...,p .
\] 
We arrive to the symmetrized equation:
\[
  \frac{d  w}{d  t} + \widetilde{A} w = \widetilde{\varphi} (t),    
  \quad 0 < t \leq T ,
\]
where
\[
 w = B^{1/2} v,  
 \quad \widetilde{A} = B^{-1/2} A B^{-1/2},
 \quad \widetilde{\varphi} = B^{-1/2} \varphi ,
\] 
\[
  \widetilde{A} = \sum_{\alpha =1}^{p} \widetilde{A}_\alpha,
  \quad  \widetilde{A}_\alpha = \widetilde{A}_\alpha^* = B^{-1/2} A_\alpha  B^{-1/2} \geq 0,
  \quad \alpha = 1,2,...,p .
\]
Now we can employ general results of the stability  (correctness) theory  
for operator-difference schemes \cite{Samarskii1989,SamarskiiMatusVabischevich2002}.

\section{Splitting schemes} 

The investigation of domain decomposition schemes for time-dependent problems is based
on consideration of the relevant splitting schemes \cite{Vabischevich2013}.
Here we highlight the case of the two-component splitting ($ p = 2 $).
In this case, we can focus on the following methods:
\begin{itemize}
 \item the Douglas-Rachford scheme,
 \item the Peaceman-Rachford scheme,
 \item Factorized schemes,
 \item Symmetric scheme of componentwise splitting.
\end{itemize} 

In particular, the Douglas-Rachford scheme may be written as:
\[
 \left (\frac{u^{n+1/2} - u^{n}}{\tau},v \right ) + 
 a_1(u^{n+1/2},v) +  a_2(u^{n},v) = 
 (f^{n+1},v),
\]
\[
 \left (\frac{u^{n+1} - u^{n}}{\tau},v \right ) + 
 a_1(u^{n+1/2},v) +  a_2(u^{n+1},v) = 
 (f^{n+1},v),
 \quad \forall v \in {\cal V}^h .
\]
The problem in the subdomain (explicit-implicit scheme) is formulated in the form:
\[
  (u^{n+1/2}, v) + \tau  a_1(u^{n+1/2},v) = (u^{n}, v) - \tau  a_2(u^{n},v) + \tau (f^{n+1},v),
\]
\[
  (u^{n+1}, v) + \tau  a_2(u^{n+1},v) =  (u^{n}, v) - \tau  a_1(u^{n+1.2},v) + \tau (f^{n+1},v).
\]
In the domain  $\Omega \setminus \bar{\Omega}_1$ we have the explicit scheme for computing $u^{n+1/2}$. 
It is sufficient to perform computations for  $\partial \Omega_1 \cap \Omega$. 
Next, we solve the problem in the subdomain $\Omega_1$ to find $u^{n+1/2}$. 
The calculation of $u^{n+1}$ are performed similarly.

In the more general case, we focus on factorized schemes with weights:
\begin{equation}\label{vabishchevich_contrib-10}
 \left (\frac{u^{n+1/2} - u^{n}}{\tau},v \right ) + 
 a_1(\sigma u^{n+1/2} + (1-\sigma) u^{n},v) +  a_2(u^{n},v) = 
 (f^{n+\sigma} ,v), 
\end{equation} 
\begin{equation}\label{vabishchevich_contrib-11}
\begin{split}
 & \left (\frac{u^{n+1} - u^{n}}{\tau},v \right )  + 
 a_1(\sigma u^{n+1/2} + (1-\sigma) u^{n},v) \\
 & +  a_2(\sigma u^{n+1} + (1-\sigma) u^{n},v)  = 
 (f^{n+\sigma} ,v), \quad \forall v \in {\cal V}^h, \quad n=1,2,...  ,
\end{split}
\end{equation} 
where $f^{n+\sigma} = f(\sigma t^{n+1} + (1-\sigma)t^n)$.
For $\sigma = 1/2$, we obtain the Peaceman-Rachford scheme, whereas at
$\sigma = 1$ we have the Douglas-Rachford scheme.

The operator (matrix) form of the factorized scheme (\ref{vabishchevich_contrib-10}), (\ref{vabishchevich_contrib-11})   seems like this:
\begin{equation}\label{vabishchevich_contrib-12}
  (B + \sigma \tau A_1) B^{-1}  (B + \sigma \tau A_2) 
  \frac{y^{n+1} - y^{n}}{\tau }
  +  A y^{n} = \varphi^n .
\end{equation} 

\begin{theorem} 
The factorized regionally additive difference scheme (\ref{vabishchevich_contrib-12}) 
with $\sigma \geq 1/2$ is unconditionally stable.
The following estimate for stability takes place:
\begin{equation}\label{vabishchevich_contrib-13}
 \|(B + \sigma \tau A_2) y^{n+1}\|_{B^{-1}} \leq 
 \|(B + \sigma \tau A_2) y^{n}\|_{B^{-1}} + \tau \|\varphi^n\|_{B^{-1}}  .
\end{equation} 
\end{theorem}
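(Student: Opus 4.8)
\emph{Plan of proof.} The idea is to pass first to the symmetrized form already used above (so that $B=I$), and then to follow the quantity whose norm appears in (\ref{vabishchevich_contrib-13}), namely $w^{n}:=(B+\sigma\tau A_2)y^{n}$, by means of an explicitly computed transition operator. I would left-multiply (\ref{vabishchevich_contrib-12}) by $B^{-1/2}$ and set $z^{n}=B^{1/2}y^{n}$, $\widetilde A_\alpha=B^{-1/2}A_\alpha B^{-1/2}\ge 0$, $\widetilde\varphi^{n}=B^{-1/2}\varphi^{n}$; the scheme turns into $(I+\sigma\tau\widetilde A_1)(I+\sigma\tau\widetilde A_2)\,\tau^{-1}(z^{n+1}-z^{n})+\widetilde A z^{n}=\widetilde\varphi^{n}$ with $\widetilde A=\widetilde A_1+\widetilde A_2$, and one checks $\|(B+\sigma\tau A_2)y^{n}\|_{B^{-1}}=\|(I+\sigma\tau\widetilde A_2)z^{n}\|$, $\|\varphi^{n}\|_{B^{-1}}=\|\widetilde\varphi^{n}\|$. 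Hence it suffices to prove (\ref{vabishchevich_contrib-13}) for $B=I$; from now on I drop tildes and write $G_\alpha=I+\sigma\tau A_\alpha$, $H_\alpha=I-(1-\sigma)\tau A_\alpha$.

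The algebraic core is the identity $G_1G_2-\tau A=H_1H_2+(2\sigma-1)\tau^{2}A_1A_2$, which is verified by expanding both sides. Writing the scheme as $G_1G_2y^{n+1}=(G_1G_2-\tau A)y^{n}+\tau\varphi^{n}$, applying $G_1^{-1}$, substituting $w^{n}=G_2y^{n}$, and using $G_1^{-1}A_1=(\sigma\tau)^{-1}(I-G_1^{-1})$ together with $A_2G_2^{-1}=(\sigma\tau)^{-1}(I-G_2^{-1})$, I obtain the two-level recursion $w^{n+1}=S\,w^{n}+\tau G_1^{-1}\varphi^{n}$ with $S=M_1M_2+c\,N_1N_2$, where $M_\alpha=G_\alpha^{-1}H_\alpha$, $N_\alpha=I-G_\alpha^{-1}$ and $c=(2\sigma-1)/\sigma^{2}$, which is $\ge 0$ precisely because $\sigma\ge 1/2$. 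All four operators $M_\alpha,N_\alpha$ are self-adjoint functions of $A_\alpha\ge 0$. Granting $\|S\|\le 1$ and noting $\|G_1^{-1}\|\le 1$ (as $G_1\ge I$), the recursion gives $\|w^{n+1}\|\le\|w^{n}\|+\tau\|\varphi^{n}\|$, which is (\ref{vabishchevich_contrib-13}) after returning to the original variables.

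It remains to prove $\|S\|\le 1$. By the spectral theorem, $M_\alpha^{2}+c\,N_\alpha^{2}=g(A_\alpha)$ with $g(\lambda)=\big(1-2(1-\sigma)\tau\lambda+\sigma^{2}\tau^{2}\lambda^{2}\big)\,(1+\sigma\tau\lambda)^{-2}=1-2\tau\lambda\,(1+\sigma\tau\lambda)^{-2}$. For $\sigma\ge 1/2$ one has $0\le g(\lambda)\le 1$ on $\lambda\ge 0$: the upper bound is obvious, and for the lower bound the numerator $1+2(\sigma-1)\tau\lambda+\sigma^{2}\tau^{2}\lambda^{2}$ has discriminant $4(1-2\sigma)\le 0$. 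Thus $0\le M_\alpha^{2}+c\,N_\alpha^{2}\le I$. Now I would write $Sw=M_1(M_2w)+\sqrt c\,N_1(\sqrt c\,N_2w)$ and use the elementary bound $\|Xa+Yb\|^{2}\le\|XX^{*}+YY^{*}\|\,(\|a\|^{2}+\|b\|^{2})$ (which is $\|XX^{*}+YY^{*}\|=\|L\|^{2}$ for the block operator $L:H\times H\to H$, $L(u,v)=Xu+Yv$) with $X=M_1$, $Y=\sqrt c\,N_1$, $a=M_2w$, $b=\sqrt c\,N_2w$; this yields $\|Sw\|^{2}\le\|M_1^{2}+cN_1^{2}\|\cdot\big((M_2^{2}+cN_2^{2})w,w\big)\le\|w\|^{2}$.

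The only genuinely delicate point is this last estimate. Since $A_1$ and $A_2$ need not commute, applying the triangle inequality directly to $S=M_1M_2+cN_1N_2$ only gives $\|S\|\le 1+c>1$; the cancellation between the two summands is supplied by the fact that the ``rows'' $(M_\alpha,\sqrt c\,N_\alpha)$ have norm $\le 1$, which is exactly the scalar inequality $g(\lambda)\le 1$, combined with the block Cauchy--Schwarz step above. Alternatively, one could cite the general stability theory for factorized operator-difference schemes \cite{Samarskii1989,SamarskiiMatusVabischevich2002,Vabischevich2013} applied to the symmetrized equation, where the hypothesis $\sigma\ge 1/2$ guarantees the operator inequality that makes the scheme unconditionally stable.
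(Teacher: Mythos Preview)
Your argument is correct. The first step---left-multiplying by $B^{-1/2}$, setting $z^{n}=B^{1/2}y^{n}$, $\widetilde A_\alpha=B^{-1/2}A_\alpha B^{-1/2}$, $\widetilde\varphi^{n}=B^{-1/2}\varphi^{n}$, and observing that $\|(B+\sigma\tau A_2)y^{n}\|_{B^{-1}}=\|(I+\sigma\tau\widetilde A_2)z^{n}\|$---is exactly what the paper does. From that point on the two proofs diverge in style rather than substance: the paper simply invokes the known stability result for factorized schemes $(I+\sigma\tau\widetilde A_1)(I+\sigma\tau\widetilde A_2)\tau^{-1}(v^{n+1}-v^{n})+\widetilde A v^{n}=\widetilde\varphi^{n}$ from \cite{Vabischevich2013} and reads off the estimate, whereas you unpack that black box by computing the transition operator $S=M_1M_2+cN_1N_2$ for $w^{n}=G_2z^{n}$ and bounding $\|S\|\le 1$ via the block Cauchy--Schwarz inequality together with the scalar bound $g(\lambda)\le 1$. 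Your route has the merit of being self-contained and of isolating precisely where $\sigma\ge 1/2$ enters (nonnegativity of $c$ and of the discriminant); the paper's route is shorter because it defers to established operator-difference stability theory---indeed, you yourself note this alternative in your final sentence, and that is the whole of the paper's proof. One minor remark: the lower bound $g(\lambda)\ge 0$ that you verify is not actually needed for the contraction argument, since $M_\alpha^{2}+cN_\alpha^{2}\ge 0$ is automatic when $c\ge 0$.
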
 

\begin{proof} 
Taking into account the previously introduced notations the scheme (\ref{vabishchevich_contrib-12}) can be written as 
\[
  (E + \sigma \tau \widetilde{A}_1) (E + \sigma \tau \widetilde{A}_2) 
  \frac{v^{n+1} - v^{n}}{\tau }
  +  \widetilde{A} v^{n} = \widetilde{\varphi}^n , 
\] 
where $v^{n} = B^{-1/2} y^n, \ \widetilde{\varphi}^n =  B^{-1/2}\varphi^n$.
This scheme is unconditionally stable \cite{Vabischevich2013} for  $\sigma \geq 1/2$ and the following estimate holds for the solution
\[
  \|(E + \sigma \tau \widetilde{A}_2) v^{n+1}\| \leq 
 \|(E + \sigma \tau \widetilde{A}_2) v^{n}\| + \tau \|\widetilde{\varphi}^n\| .
\] 
Hence, we obtain the required estimate (\ref{vabishchevich_contrib-13}).
\end{proof} 

For multicomponent splitting, the basic classes of additive schemes \cite{Marchuk1990,Vabischevich2013}
are the following:
\begin{itemize}
 \item Schemes of componentwise splitting, 
 \item Additively averaged schemes of summarized approximation,
 \item Regularized additive schemes,
 \item Vector additive schemes.
\end{itemize} 

\section{Numerical tests} 

As a test problem, we consider the differential problem  (\ref{vabishchevich_contrib-1})--(\ref{vabishchevich_contrib-3}) 
for
\[
\Omega = \{ x \ | \ 0 < x < 1  \}, \quad 0 \leq t \leq T = 2^{-4},
\]
with the constant coefficient $k(x)=1$, the homogeneous right-hand side  ($f(x,t)=0$) and the exact solution 
\[
u(x, t) = \exp(-\pi^2 t) \sin(\pi x). 
\]
The problem is solved on a uniform mesh in time and space with size-steps $h, N h = 1$ и $\tau, M \tau = T$.
The computational domain is divided into intervals of length $H$ and each interval is divided into two subdomains with width of overlap  $q$.
The function $\eta_\alpha, \alpha=1,2$  is defined as shown in Fig.~\ref{vabishchevich_contrib-fig:1}.
Errors of approximate solution are estimated to be ${\displaystyle \varepsilon = \max_{n} \|y^n(x) - u(x,t^n)\|}$.

\begin{figure}
\begin{tikzpicture}[scale=0.7]

	\foreach \x in {0,...,15}
		\draw (\x,0) circle (0.1);

	\foreach \x in {0,1,9}
		\draw[dashed] (\x,1) -- (\x+1,1);
	\foreach \x in {2,10}
		\draw[dashed] (\x,1) -- (\x+3,0);
	\foreach \x in {2,10}
		\draw[dashed] (\x,1) -- (\x+3,0);
	\foreach \x in {6}
		\draw[dashed] (\x,0) -- (\x+3,1);
	\foreach \x in {5,13,14}
		\draw[dashed] (\x,0) -- (\x+1,0);
		
	\draw (-0.5, 1) node {$\eta_1$};
	\draw (1, 0.5) node {$\Omega_1$};
	\draw (9.5, 0.5) node {$\Omega_1$};

	\foreach \x in {0,1,9}
		\draw (\x,0) -- (\x+1,0);
	\foreach \x in {2,10}
		\draw (\x,0) -- (\x+3,1);
	\foreach \x in {2,10}
		\draw (\x,0) -- (\x+3,1);
	\foreach \x in {6}
		\draw (\x,1) -- (\x+3,0);
	\foreach \x in {5,13,14}
		\draw (\x,1) -- (\x+1,1);
		
	\draw (15.5, 1) node {$\eta_2$};
	\draw (5.5, 0.5) node {$\Omega_2$};
	\draw (14, 0.5) node {$\Omega_2$};
	
	\draw[dotted] (0,-1.5) -- (0, 0);
	\draw[dotted] (7.5,-1.5) -- (7.5, 0.5);
	\draw [|<->|] (0,-1.5) -- (7.5,-1.5) node [above,midway]{$H$};

	\draw[dotted] (10,-1.5) -- (10, 0);
	\draw[dotted] (13,-1.5) -- (13, 0);
	\draw [|<->|] (10,-1.5) -- (13,-1.5) node [above,midway]{$q$};

	\draw[dotted] (14,-1.5) -- (14, 0);
	\draw[dotted] (15,-1.5) -- (15, 0);
	\draw [|<->|] (14,-1.5) -- (15,-1.5) node [above,midway]{$h$};
\end{tikzpicture}
\caption{Decomposition of one-dimensional domain into two subdomains with overlap.}
\label{vabishchevich_contrib-fig:1}
\end{figure}
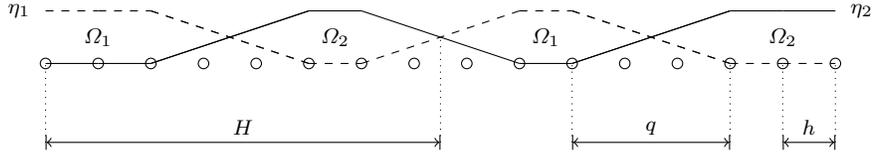

To study the dependence of error on time step we conduct numerical experiments for   $\tau=2^{-4 - \gamma/4},\ \gamma=0,1,...,48$ with $h=2^{-10},\ H = 2^{-1},\ q = h$. 
Fig.~\ref{vabishchevich_contrib-fig:2} presents errors of the implicit scheme, 
the Crank-Nicolson scheme and the factorizied decomposition schemes  
(\ref{vabishchevich_contrib-10}), (\ref{vabishchevich_contrib-11}) for $\sigma = 1$ and $\sigma = 1/2$.
For the errors of decomposition schemes we observe asymptotic behavior  $\mathcal{O}(\tau^2)$ 
for the large $\tau$ and $\mathcal{O}(\tau)$ for the small $\tau$. 

We perform the study of dependence of error on  mesh size using the minimal overlap $q = h$ for $h=2^{-\gamma/4}, \ \gamma=4,5,...,52$ and  $\tau=2^{-10},\ H = 2^{-1}$.
For the implicit scheme, when we decrease the step-size in space, the term $\mathcal{O}(h^2)$  dominates, then the term   $\mathcal{O}(\tau)$ dominates (Fig.~\ref{vabishchevich_contrib-fig:3}).
The asymptotic error of decomposition schemes is close to $\mathcal{O}(h^{-1})$.

\begin{figure}[h!]
\begin{minipage}{0.5\linewidth }
\includegraphics[width= \linewidth]{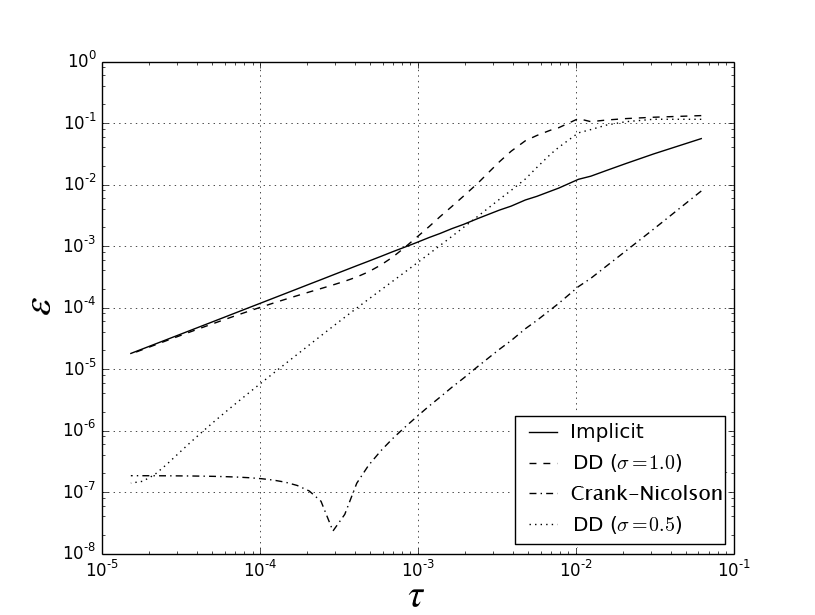}
\caption{Dependence on time step.}
\label{vabishchevich_contrib-fig:2}
\end{minipage}
\begin{minipage}{0.5\linewidth }
\includegraphics[width= \linewidth]{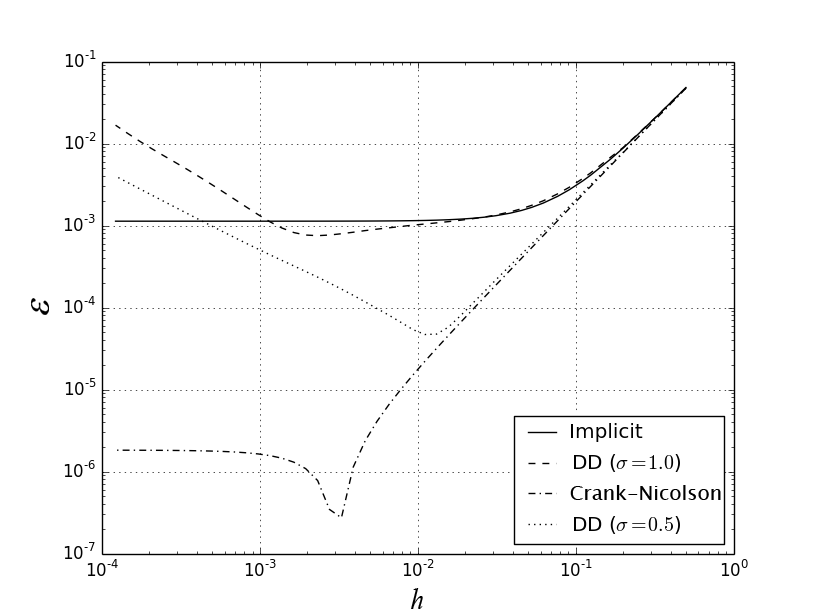}
\caption{Dependence on mesh size.}
\label{vabishchevich_contrib-fig:3}
\end{minipage}
\end{figure} 
\vspace{-5mm} 
The dependence of error on the size of subdomains is shown in Fig.~\ref{vabishchevich_contrib-fig:4}.
Experiments are conducted for  $H=2^{-\gamma/2}, \gamma=0,1,...,14$ with  $h=2^{-10}, \tau=2^{-10}, q=h$. 
Numerical experiments to study the dependence of error on the wight of overlap of subdomains are conducted  $q=(2^{\gamma/4 +1} - 1) h, \ \gamma=4,5,...,36$ with $h=2^{-11}, \tau=2^{-9}, H=2^{-1}$ and presented in Fig.~\ref{vabishchevich_contrib-fig:5}. 

\begin{figure}[h!]
\begin{minipage}{0.5\linewidth }
\includegraphics[width=\linewidth]{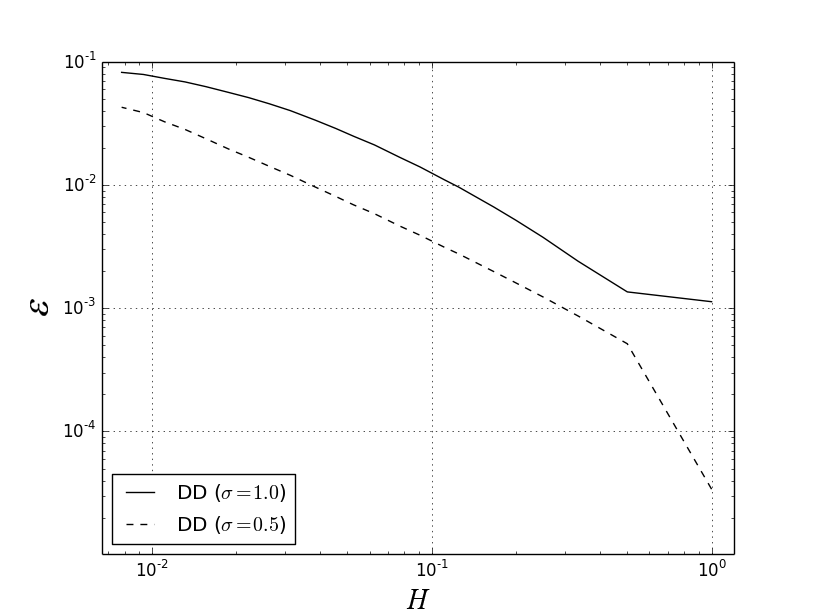}
\caption{Dependence on size of subdomains.}
\label{vabishchevich_contrib-fig:4}
\end{minipage}
\begin{minipage}{0.5\linewidth }
\includegraphics[width=\linewidth]{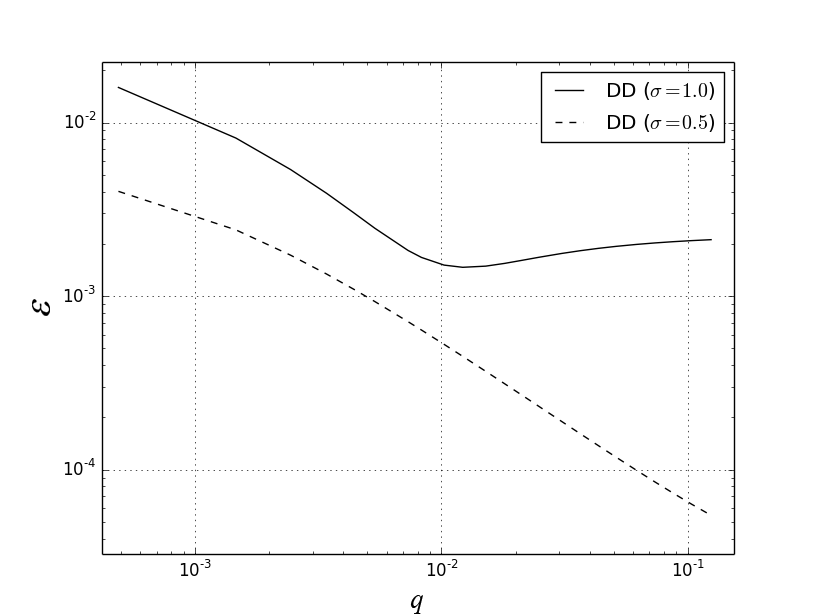}
\caption{Dependence on width of overlap of subdomains.}
\label{vabishchevich_contrib-fig:5}
\end{minipage}
\end{figure} 
\vspace{-10mm} 


\end{document}